\pgfplotsset{compat = 1.15}
\theoremstyle{definition}
\newtheorem{definition}{Definition}[section]
\newtheorem{theorem}[definition]{Theorem}
\newtheorem{proposition}[definition]{Proposition}
\newtheorem{example}[definition]{Example} 
\newtheorem{rk}{Remark}[section]
\newtheorem{lemma}[definition]{Lemma}
\newtheorem{hyp}[definition]{Hypothesis}
\let\oldexample\example
\newcommand{\jump}{\hfill\break}
\newcommand{\R}{\mathbb{R}}
\newcommand{\C}{\mathbb{C}}
\newcommand{\Z}{\mathbb{Z}}
\newcommand{\e}{\textnormal{e}}
\renewcommand{\u}{\textnormal{u}}
\renewcommand{\i}{\textnormal{i}}
\title{The Mean-Field Ott-Antonsen Manifold is an Unstable Manifold in the Continuum Limit}
\author{Christian Kuehn, Giacomo Landi}
\date{\today}
\begin{document}

\maketitle

\begin{abstract}
We study interacting particle systems of Kuramoto-type. Our focus is on the dynamical relation between the partial differential equation (PDE) arising in the continuum limit (CL) and the one obtained in the mean-field limit (MFL). Both equations arise when we are considering the limit of infinitely many interacting particles but the classes of PDEs are structurally different. The CL tracks particles effectively pointwise, while the MFL is an evolution for a typical particle. First, we briefly discuss the relation between solutions of the CL and the MFL showing how to generate solutions of the CL starting from solutions of the MFL. Our main result concerns a dynamical relation between important invariant manifolds of the CFL and the MFL. In particular, we give an explicit proof that the unstable manifold of the homogeneous steady state of the CL is the direct dynamical analogue of the famous Ott-Antonsen manifold for the MFL.
\end{abstract}
\section{Introduction}
Interacting particle systems form a broad class of coupled differential equations used to describe populations of interacting agents (“particles”) that can display collective behavior without any centralized control. Such systems are employed across a wide range of scientific disciplines to model diverse phenomena, including opinion dynamics \cite{hegselmann2015opinion}, collective organization in animal groups \cite{青木一郎1982simulation}, \cite{ballerini2008interaction}, \cite{lopez2012behavioural}, synchronization of coupled oscillators \cite{kuramoto2005self}, and biological tissue formation \cite{barre2019modelling}. These models have also attracted considerable attention in the mathematical community, especially in the context of Kuramoto-type systems \cite{acebron2005kuramoto,gupta2014kuramoto}, with particular focus on their behavior in the large-population (infinite-particle) limit, which is the main subject of this work.

The relevance of studying large-population limits becomes clear when considering the size of certain systems of interest. For example, starling flocks can contain several thousand individuals \cite{ballerini2008interaction}, while mammalian neural networks may involve up to $10^{11}$ neurons \cite{jabin2025mean}. Simulating such massive populations by directly integrating a system of $N$ coupled equations, with potentially $N^2$ pairwise interactions, quickly becomes computationally prohibitive as $N$ grows. To overcome this challenge, a key idea, introduced in the seminal works \cite{braun1977vlasov}, \cite{dobrushin1979vlasov}, \cite{golse2016dynamics}, \cite{jabin2018quantitative}, \cite{neunzert2006approximation}, \cite{serfaty2020mean}, \cite{sznitman2006topics}, is to shift the focus from the individual particle trajectories to the evolution of the population’s density, leading to a mean-field description of the system.\jump\jump
\textbf{\textit{Mean-field limit of exchangeable particle systems.}}
For each particle, we fix a phase space $\mathcal{X}\subseteq \mathbb{R}^d$ with $d\in\mathbb{N}$. The configuration of $N$ interacting particles is denoted by $(x_i^N)_{i\in\{1,\cdots,N\}}$ and evolves according to this system of coupled ordinary differential equations:
\begin{equation}\label{DS}
    \begin{cases}\tag{DS}
    \dot{x}^N_i(t)=\frac{1}{N}\sum_{j=1}^N \phi(x_i^N(t),x_j^N(t))\qquad\forall i=1,\ldots,N,\quad t\in[0,T]\\
    x^N_i(0)=x^{N,0}_i\qquad\forall i=1,\ldots,N,
    \end{cases}
\end{equation}

where
\begin{itemize}
    \item the index set $\{1,\ldots,N\}$ labels the individual particles, characterizing their identities;
    \item $(x_i^N)_{i\in\{1,\cdots,N\}}$ represents the state of the interacting particles, and may represent positions \cite{faure2015crowd}, opinions \cite{hegselmann2015opinion}, velocities \cite{cucker2007emergent}, phases \cite{kuramoto2005self}, or angles \cite{vicsek1995novel}. The model then determines the dimension $d$ of the state space;
    \item $\phi:\R^d\times\R^d\rightarrow\R^d$ encodes the pairwise interaction between particles. We will refer to it as the \textit{interaction function} of the system.
\end{itemize}
Under standard assumptions on $\phi$, the solution $(x_i^N)_{i\in\{1,\cdots,N\}}$ of the system exists in $C([0,T];(\R^d)^N)$.
 As $N\rightarrow+\infty$ one can show that the system \eqref{DS} is well approximated by its so-called mean-field limit $\mu \in C([0,T]; \mathcal{P}(\mathbb{R}^d))$, which satisfies the following equation:
 \begin{equation}\label{MFL}
\begin{cases}\tag{MFL}
    \partial_t \mu_t(x)+\nabla_x\cdot\left(\mu_t(x)\int_{\R^d}\phi(x,y)\text{d}\mu_t(y)\right)=0,\\
    \mu_{t=0}=\mu_0.
\end{cases}
\end{equation}
commonly referred to as the Vlasov equation in the context of interacting particle systems (IPS). When the interaction function $\phi$ is a gradient, the equation is known in the literature as the aggregation equation. The interpretation of $\mu$ is usually the following one: let $\Delta x\subset\R^d$, then $\mu_t(\Delta x)$ represents the probability of finding an agent with position in $\Delta x$ at time $t$.
 The relationship between the particle system \eqref{DS} and its mean-field limit equation \eqref{MFL} is elucidated by introducing the empirical measure $$\mu_t^N\coloneqq\frac{1}{N}\sum_{i=1}^N\delta_{x_i^N(t)}$$  which can be shown to satisfy the mean-field equation \eqref{MFL}, see \cite[Th. 1.3.1]{golse2016dynamics}. The convergence of the microscopic dynamics $(x_i^N)_{i=1}^N$ governed by \eqref{DS} towards the mean-field solution $\mu$ of \eqref{MFL} can be derived using a stability argument for the mean-field equation; see \cite{golse2016dynamics, dobrushin1979vlasov}. This argument yields an estimate of the form: $$W_1(\mu_t,\mu_t^N)\leq C(T)W_1(\mu_0,\mu_0^N),$$ where $W_1$ is the 1-Wasserstein distance \cite{dobrushin1979vlasov}. See also \cite{loeper2006uniqueness} where Wasserstein distances with different exponents different from $1$ are used in the same context. For a detailed introduction on Wasserstein distances, we refer the reader to \cite{villani2008optimal}.
\jump\jump
\textbf{\textit{The continuum limit.}}
The previously discussed mean-field limit establishes the weak convergence of the microscopic dynamics \eqref{DS} towards the solution of the transport equation \eqref{MFL}.
An alternative approach, known as the continuum limit, instead yields pointwise convergence of the solutions of \eqref{DS} to those of an integro-differential equation:
\begin{equation}\label{CL}
\begin{cases}\tag{CL}
    \partial_t x(t,\xi)=\int_0^1 \phi\big(x(t,\xi),x(t,z)\big)\textnormal{d}z,\\
    x(0,\xi)=x_0(\xi)
\end{cases}
\end{equation}
where, in this case, the solution $x(t,\xi)$ represents the position in $\R^d$ at time of the particle labeled with $\xi$, where $\xi\in[0,1]$ is a continuous representation of particle's label. This limit is as important as the previous one, especially in the context of network dynamics and graph limits, two research fields that have seen significant development in recent years, see for example \cite{medvedev2014nonlinear,medvedev2018continuum,kuehn2019power,gkogkas2021continuum}.

\begin{rk}\label{Kura}
The general case of non-exchangeable particles, i.e., when the interaction function is of the form $\phi=\phi(\xi,\zeta,x,y)$ is also of big interest, see for example \cite{kuehn2022vlasov}, \cite{medvedev2014nonlinear} and \cite{ayi2024large}. One important and remarkable example is the Kuramoto model
$$\dot{\theta}_i=\omega_i+\frac{K}{N}\sum_{j=1}^N \sin(\theta_j-\theta_i),\qquad\forall i=1,\ldots,N$$
where the $\omega_i$ represent the natural frequencies of the oscillators, $K$ the coupling strength between the oscillators and $\theta_i\in \mathbb{S}^1$ is the phase of oscillator $i$, see \cite{acebron2005kuramoto}, \cite{rodrigues2016kuramoto} and \cite{gupta2014kuramoto} for a broader and complete perspective about this model. Here, the non-exchangeability is encoded in the different natural frequencies $\omega_i$.
\end{rk}

Understanding the relations and the properties of the \eqref{CL} and the \eqref{MFL} is as important and challenging as there are no previous results available that directly link important dynamically invariant sets in both PDE limits of IPS. The goal of this paper is to deepen the connection between the \ref{MFL} and the \ref{CL} PDEs. In particular, we will show how to obtain solutions of \ref{CL} starting from solutions of \ref{MFL}. The second and main result of the paper is the explicit form of a family of unstable manifolds for the \ref{CL} obtained applying the previous transformation to the Ott-Antonsen (OA) manifold of the \ref{MFL}.\\

The OA manifold has been discovered in the seminal paper \cite{ott2008low} and since then has been widely studied and discussed, see for example \cite{tyulkina2018dynamics}, \cite{ott2009long}, \cite{engelbrecht2020ott} and \cite{martens2009exact}. It provides a remarkable analytic reduction of the mean-field limit of large ensembles of coupled phase oscillators with a particular choice of the interaction function.

The OA manifold can be utilized in a broad class of systems and we briefly review the intuition behind the OA manifold construction here. Suppose we want to determine a mean-field density $f(\theta,\omega,t)$, where $\theta$ is the phase of an oscillator and $\omega$ is the natural frequency an oscillator would have in isolation from the outside world; $f\textnormal{d}\theta \textnormal{d}\omega$ is the fraction of oscillators at time
$t$ whose phases and natural frequencies lie in the infinitesimal ranges  $\textnormal{d}\theta $ and $\textnormal{d}\omega$. Because the natural frequency $\omega$ of each oscillator does not vary with time, the marginal frequency distribution is time independent, i.e.
$$\int_0^{2\pi}f(\theta,\omega,t)\textnormal{d}\theta =g(\omega).$$
The key quantity that captures the macroscopic behavior of the distribution function $f$ is the order parameter $r(t)$, originally introduced in \cite{kuramoto2003chemical} and defined by
$$r(t)=\int_\R\int_0^{2\pi} f(\theta,\omega,t)\textnormal{e}^{-\i\theta}\textnormal{d}\theta \textnormal{d}\omega.$$
The general system in which the OA manifold appears is the one governed by the Vlasov-type equation
\begin{equation}\label{Continuity}
    \partial_tf+\partial_\theta\left(fV\right)=0
\end{equation}
where $V(\theta,\omega,t)$ can be expressed as
$$V(\theta,\omega,t)=\omega+\frac{1}{2\i}\left[H(t)\e^{-\i\theta}-H^*(t)\e^{\i \theta}\right]$$
where $H^*$ indicates the complex conjugate of $H$. Different choices of $H$ could be the following:  for the classical Kuramoto \cite{kuramoto2003chemical} problem, $H=Kr(t)$, where $K$ is the strength of the coupling between oscillators, i.e.,
\begin{equation}\label{MFL-Kura}
    \partial_t f(\theta,\omega,t)+\partial_\theta\left[f\left(\omega+K\int_0^{2\pi}\int_\R f(\phi,\omega',t)\sin(\phi-\theta)\textnormal{d}\omega'\textnormal{d}\phi\right)\right]=0.
\end{equation}
Moreover, in the context of the circadian rhythm model studied in \cite{ott2008low,childs2008stability}, one has $H = K r(t) + \Lambda$, where $\Lambda$ is a constant representing the strength of the diurnal forcing associated with the day–night light cycle. The term $\Lambda$ may also be made time-dependent, $\Lambda = \Lambda(t)$, to account for variations between sunny and cloudy days.

In models incorporating time delays in the response of oscillators to one another \cite{yeung1999time,lee2009large}, the interaction term takes the form $H = K \int_0^\infty \rho(\tau), r(t-\tau), \mathrm{d}\tau$, where $\rho(\tau)$ denotes the distribution of delays along the coupling links \cite{lee2009large}.

For the case of pedestrian-induced oscillations of footbridges \cite{abdulrehem2009low,strogatz2005crowd}, the interaction is given by $H = K \ddot{y}(t)$, where $\ddot{y}(t)$ is the lateral acceleration of the bridge. The bridge motion itself follows a damped oscillator equation driven by $r(t)$, which represents the collective effect of the pedestrians.\\

Then expanding $f$ in Fourier series in $\theta$, we have
$$f(\theta,\omega,t)=\frac{g(\omega)}{2\pi}\left\{1+\sum_{n=1}^\infty f_n(\omega,t)\e^{\i n\theta}+f_n^*(\omega,t)\e^{-\i n\theta}\right\}$$
the Ott-Antonsen ansatz consists in considering a restricted class of Fourier coefficients $f_n(\omega,t)$ such that
$$f_n(\omega,t)=(\alpha(\omega,t))^n$$
substituting this series expansion in \ref{Continuity} one obtains a Riccati equation for $\alpha$
$$\partial_t\alpha+\i\omega\alpha+\frac{1}{2}(H\alpha^2-H^*)=0.$$
Since it can be shown that the time evolution of $\alpha$ stays inside the unit disk, one can perform the summation and obtain
$$f(\omega,\theta,t)=\frac{g(\omega)}{2\pi}\frac{1-\rho^2}{(1-\rho)^2+4\rho\sin^2(\frac{1}{2}(\theta-\psi))}$$
where $\alpha\equiv\rho e^{-\i\psi}$.\\

One approach to try to justify the formal ansatz was to observe that the incoherent state $\rho=0$ and the partially synchronized state $\rho=\textnormal{const.}>0$ satisfy it. Moreover, certain solutions obtained by the OA ansatz can be linked to connecting orbits from the incoherent state to a (partially) synchronized one. Remarkably, for the classical Kuramoto $H=Kr(t)$, on the OA manifold the system can be reduced to this finite set of ODEs
$$
\begin{cases}
    \partial_t \rho+(1-\frac{K}{2})\rho+\frac{K}{2}\rho^3=0,\\
    \partial_t \psi =0
\end{cases}
$$
if we consider $g(\omega)$ to be a Lorentzian distribution. Thus the dynamics is described by a single real nonlinear, first order, ordinary differentia equation.  Moreover it has been shown in \cite{ott2009long} that, unless $g(\omega)$ is singular, i.e. a Dirac's delta, which is the case of homogeneous oscillators with same natural frequencies, the Ott-Antonsen manifold is attracting, so that describes the long behavior of every solution of such mean-field limit equation.

The precise link of the OA ansatz with the finite dimensional system and the Watanabe-Strogatz theory \cite{watanabe1993integrability},\cite{watanabe1994constants} for identical oscillators has been explored in \cite{vlasov2016dynamics},\cite{marvel2009identical},\cite{cestnik2022exact}. Other references regarding the properties of the Ott-Antonsen manifold, like the existence of periodic orbits and discussions on its attractiveness, can be found in \cite{omel2022periodic},\cite{pietras2016ott},\cite{engelbrecht2020ott}. In this paper, we are going to provide a completely different view on the OA manifold, namely that it can be directly matched to an important unstable manifold in the CL PDE. 

\section{Main results}
In this section we will state our main results and prove them. For the sake of clarity we would like to point out that in the next subsection we shall present the same result in two different settings. The first one deals with unbounded domains, which are of interest as a general framework commonly considered in the literature, while the second one concerns periodic domains, as we are particularly interested in the continuum limit of the Kuramoto model. We will point out when the switch will happen between the two cases. The first case covered will be the phase space $\mathbb{R}$, while the second case will be the one-dimensional periodic phase space case, i.e. when working on the circle $\mathbb{S}^1=\mathbb{R}/2\mathbb{\pi Z}$. 

\jump\jump
\textbf{\textit{Relationship between solutions of continuum and aggregation equation.}}
Let $I=[0,1]$ be the unit interval, which tracks the particle label in the continuum limit \eqref{CL}. It has been shown in \cite{biccari2019dynamics} that given a solution $x(t,\xi)\in C([0,T];L^\infty(I,\R^d))$ of \eqref{CL} it is possible to obtain a solution $\Bar{\mu}_t(x)$ of \eqref{MFL} using the "continuous" empirical measure
\begin{equation}\label{transf2}
    \Bar{\mu}_t(x)=\int_I\delta_{x(t,\xi)}\textnormal{d}\xi.
\end{equation}
Now we would like to show that, under certain hypothesis, it is possible to construct a solution of the continuum limit \eqref{CL} starting from one of the aggregation equation \eqref{MFL}. This idea has been inspired from a proposition in \cite{ayi2024large} that states that for any $\mu\in\mathcal{P}(\R)$, there exists a measurable function $x:I\rightarrow\R$ such that $\mu(x)=\int_I\delta_{x(\xi)}(x)\textnormal{d}\xi$ and such $x(\xi)$ is given by 
\begin{equation}\label{transf}
    x(\xi)\coloneqq\inf\left\{x\in\R\;\bigg\vert\; F_\mu(x)\geq\xi\right\}
\end{equation}
where $F_\mu(x)$ is the cumulative distribution function (CDF) of $\mu$, i.e
$$F_\mu(x)=\int_{-\infty}^x \textnormal{d}\mu(y).$$
From now on we will only work with one-dimensional phase spaces, due to the properties of the CDF and the just mentioned proposition of \cite{ayi2024large}. In the following we will assume that the probability measure $\mu_t$ solving \eqref{MFL} admits a density $f_t$, whose regularity will be discussed right after Hypothesis 2.1, that satisfies the \eqref{MFL}, i.e., 
\begin{equation}
\begin{cases}
    \partial_t f_t(x)+\partial_x\left(f_t(x)\int_{\R}\phi(x,y)f_t(y)\textnormal{d}y\right)=0,\\
    f_{t}(x)|_{t=0}=f_0(x),
\end{cases}
 \end{equation}
 where $f_0$ is a given initial density. Moreover we suppose that $f_t(x)>0$ for a.e. $(t,x)\in[0,T]\times\R$. We make this assumption in order to have that $F_{\mu}(x)$ is invertible and strictly monotone so that the pseudo-inverse corresponds to the actual inverse
 $$\inf\left\{x\in\R\;\bigg\vert\; F_\mu(x)\geq\xi\right\}=F^{-1}_\mu(\xi).$$
Now we will state the hypothesis on the initial datum and the interaction function that we will use in order to have sufficient regularity of the solutions.
\begin{hyp}\hfill
    \begin{itemize}
    \item[(A1)] We suppose that the initial datum $f_0$ is such that 
    $$f(x)\geq0\qquad\text{for a.e.}\quad x\in\R$$
    $$f\in H^k(\R)\cap W^{1,1}(\R)$$
    $$\|f_0\|_{L^1}=1$$
    for some $k\geq1$. Here we use the usual notation for Sobolev spaces $W^{k,p}(\R)$ and Hilbert spaces $H^k(\R)=W^{k,2}(\R)$,
    \item[(A2)] the interaction function $\phi(x,y)$ is a gradient $\phi(x,y)= \Phi' (x-y)$, $\phi\in L^2(\R)\cap L^\infty(\R)$  and that there exists a function $h\in L^p(\R)$ for some $p\in[1,2]$ and a constant $b\in\R$ such that $\Phi''=h+b\delta_0$, where this identity has to be understood in the sense of distributions.
\end{itemize}
\end{hyp}
Under these hypothesis we have that, following \cite{laurent2007local}, for all $ T>0$ the solution $f_t(x)$ satisfies the following properties:
$$f\in L^\infty([0,T];H^k(\R)\cap W^{1,1}(\R)),\qquad\partial_tf\in L^\infty([0,T];H^{k-1}(\R))$$
$$f(x,0)=f_0(x);\qquad f(x,t)\geq0 \quad\text{for a.e.}\quad (t,x)\in[0,T]\times\R$$
$$\|f(\cdot,t)\|_{L^1}=1\qquad\text{for a.e.}\quad t\in[0,T]$$
The regularity of $f_t$ and $\partial_tf_t$ implies that $f_t(x)\in C^0([0,T];H^{k-1}(\R))$, see for example \cite[Lemma~1.2, page 7]{lions1969quelques}. Therefore the evaluation at $t=0$ makes sense. Moreover, due to classical continuous Sobolev embeddings, we have that $f_t(x)\in C^0([0,T];C^{k-1,\frac{1}{2}}(\R))$, where $C^{k,\gamma}$ is the space of functions having continuous derivatives up through order $k$ and such that the $k$-th partial derivatives are Hölder continuous with exponent $\alpha$.
In particular even the spatial evaluation makes sense.\\

We would like to emphasize that the previous set of hypotheses is only used to ensure the existence and uniqueness of a solution with these properties. The proof given in \cite{laurent2007local} for the case of $\R^d$ can be extended to periodic domains by standard techniques, which are among the cases we are interested in.
\begin{rk}
    We would like to quickly notice that on $\R$ the supposing that $\phi$ is a gradient of a function $\Phi$ is really easy to satisfy. Indeed is sufficient that for $\phi(x,y)=\phi(x-y)$ we have that $\phi\in C^0(\R)$, due to the fundamental theorem of calculus.
    On the other hand, if $\phi$ acts on $\R^d$ for $d\geq1$ and it is not a gradient then, supposing $\phi$ sufficiently regular, e.g., bounded Lipschitz and $C^1$, and the initial datum is $C^1$ then one obtains $C^1$ solutions, as remarked in \cite{lancellotti2005vlasov}. 
\end{rk}
 We are finally ready to state and prove our proposition.
\begin{proposition}\label{prop1}
    Assume the previous hypothesis and define $x_t(\xi)$ as
    $$x_t(\xi)\coloneqq F^{-1}_t(\xi)$$
    with $F^{-1}_t$ the inverse\footnote{We have omitted the dependency on the measure for brevity, so we identify $F_{\mu_t}$ with $F_{t}$ and the same holds for $F^{-1}_t$} of the CDF relative to $f_t$, solution of \eqref{MFL}. Then we have that $x_t(\xi)$ solves
    $$\partial_t x(t,\xi)=\int_I\phi(x(t,\xi),x(t,z))\textnormal{d}z\qquad\text{for a.e. }t\in[0,T].$$
\end{proposition}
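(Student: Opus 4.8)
The plan is to exploit the defining identity $F_t(x_t(\xi))=\xi$, valid for every $\xi\in I$ and a.e.\ $t\in[0,T]$, and to differentiate it in time. Writing the velocity field as $v_t(x)\coloneqq\int_\R\phi(x,y)f_t(y)\,\d y$, so that \eqref{MFL} reads $\partial_t f_t=-\partial_x(f_t v_t)$, the strategy splits into three steps: (i) express $\partial_t F_t$ through the continuity equation; (ii) differentiate the identity $F_t(x_t(\xi))=\xi$ and solve for $\partial_t x_t(\xi)$; and (iii) undo the weighting by $f_t$ through a change of variables in order to recover the integral over the label interval $I$.

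First I would compute $\partial_t F_t$. Integrating the conservative form of \eqref{MFL} in space from $-\infty$ to $x$ gives
$$\partial_t F_t(x)=\int_{-\infty}^x\partial_t f_t(y)\,\d y=-\,f_t(x)\,v_t(x)+\lim_{y\to-\infty}f_t(y)\,v_t(y).$$
The boundary term vanishes: since $\phi\in L^\infty(\R)$ and $\|f_t\|_{L^1}=1$ one has $|v_t(y)|\le\|\phi\|_{L^\infty}$ uniformly in $y$, while $f_t\in H^k(\R)\cap W^{1,1}(\R)$ forces $f_t(y)\to0$ as $y\to-\infty$. Hence $\partial_t F_t(x)=-f_t(x)v_t(x)$. (The exchange of $\partial_t$ with the spatial integral is legitimate because $\partial_t f\in L^\infty([0,T];H^{k-1}(\R))$ and $f\in W^{1,1}(\R)$.)

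Next I would differentiate $F_t(x_t(\xi))=\xi$ in $t$. By the chain rule, using $\partial_x F_t=f_t$,
$$(\partial_t F_t)\big(x_t(\xi)\big)+f_t\big(x_t(\xi)\big)\,\partial_t x_t(\xi)=0.$$
Substituting the flux identity from the previous step and cancelling the common factor $f_t(x_t(\xi))$ — which is licit precisely because $f_t>0$ a.e., exactly where the positivity in Hypothesis~(A1) and the resulting invertibility of $F_t$ enter — yields
$$\partial_t x_t(\xi)=v_t\big(x_t(\xi)\big)=\int_\R\phi\big(x_t(\xi),y\big)f_t(y)\,\d y.$$
Finally, the change of variables $y=x_t(z)$, equivalently $z=F_t(y)$ with $\d z=f_t(y)\,\d y$, maps $\R$ bijectively onto $I=[0,1]$ and transforms the right-hand side into $\int_I\phi(x_t(\xi),x_t(z))\,\d z$, which is precisely \eqref{CL}; this last step is nothing but the push-forward relation \eqref{transf2} read in the reverse direction.

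The main obstacle is the rigorous justification of the time-differentiation of the composition $t\mapsto F_t(x_t(\xi))$, since both the map $F$ and the evaluation point $x_t(\xi)$ depend on $t$. This requires showing that $t\mapsto x_t(\xi)=F_t^{-1}(\xi)$ is differentiable for a.e.\ $t$ and that the chain rule genuinely applies. I would obtain this from the implicit function theorem applied to the relation $F_t(x)=\xi$: the spatial derivative $\partial_x F_t=f_t$ is continuous and strictly positive, while the regularity $f\in C^0([0,T];C^{k-1,\frac12}(\R))$ together with $\partial_t f\in L^\infty([0,T];H^{k-1}(\R))$ provides enough joint regularity of $(t,x)\mapsto F_t(x)$ to deduce Lipschitz-in-$t$ dependence of $x_t(\xi)$, hence differentiability for a.e.\ $t$. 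The remaining points — the vanishing of the boundary flux and the validity of the substitution — are routine consequences of the integrability assumptions collected in Hypothesis~2.1.
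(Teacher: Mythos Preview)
Your proof is correct and follows essentially the same route as the paper: integrate the continuity equation to obtain $\partial_t F_t(x)=-f_t(x)v_t(x)$, differentiate the identity $F_t(x_t(\xi))=\xi$, cancel the positive factor $f_t$, and use the push-forward $f_t\,\d y=(F_t^{-1})_\#(\mathcal{L}^1\resmes I)$ to pass from the $\R$-integral to the $I$-integral. The only cosmetic difference is that the paper performs the change of variables first (writing $\partial_t F_t$ directly with the $I$-integral) and treats the time derivative in the scalar distributional sense on $[0,T]$ via \cite[Lemma~1.2, ch.~III]{temam1984navier}, whereas you postpone the change of variables and invoke the implicit function theorem to justify differentiability of $t\mapsto x_t(\xi)$; both are adequate.
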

\begin{proof}    
The first thing to prove is that 
$$\partial_t F_t(x)=-f_t(x)\int_I\phi(x,x(t,\xi))\textnormal{d}\xi$$
in the scalar distribution sense on $[0, T]$. That is because 
$$\partial_t f_t(x)+\partial_x\left(f_t(x)\int_\R \phi(x,y)f_t(y)\;\textnormal{d}y\right)=0$$
so, integrating from $-\infty$ to $x$ we obtain,
$$0=\partial_t F_t(x)+f_t(x)\int_\R \phi(x,y)f_t(y)\;\textnormal{d}y=\partial_t F_t(x)+f_t(x)\int_I \phi(x,x(t,\xi))\textnormal{d}\xi$$
where we have used that $\lim_{x\rightarrow-\infty}f_t(x)\int_\R \phi(x,y)f_t(y)\;\textnormal{d}y=0$, since $\phi$ is bounded and $f_t(x)\in C^0([0,T];C^{k-1,\frac{1}{2}}(\R))$, so that $$\lim_{x\rightarrow\pm\infty}f_t(x)=0\qquad\forall \;t\in[0,T].$$
Moreover 
$$\partial_tF_t(x)=\int_{-\infty}^x\partial_tf_t(y)\textnormal{d}y$$
has to be interpreted in the scalar distribution sense on $[0, T]$, see \cite[ Lemma 1.2, ch. III, sec. 1]{temam1984navier}.
Then using that $x(t,\xi)$ is the inverse of $F_t(x)$ we have that
$$F_t(x(t,\xi))=\xi$$
and differentiating such expression we obtain
$$\partial_t x(t,\xi)=-\frac{\partial_t F_t(x(t,\xi))}{\partial_x F_t(x(t,\xi))}=\frac{f_t(x(t,\xi))\int_I\phi(x(t,\xi),x(t,z))\textnormal{d}z}{f_t(x(t,\xi))}=\int_I\phi(x(t,\xi),x(t,z))\textnormal{d}z$$
in a scalar distributional sense on $[0,T]$. In the last equality we have used that
$$\int_\R\phi(x,y)f_t(y)\textnormal{d}y=\int_I\phi(x,x(t,\xi))\textnormal{d}\xi$$ 
and that is because $f_t$ can be written as the pushforward of the restriction of the one-dimensional Lebesgue measure restricted to $I$ by $F^{-1}_t$, i.e. $f_t(x)\textnormal{d}x=(F^{-1}_t)_\#(\mathcal{L}^1\resmes[0,1])$, see \cite[Prop.~2.2.]{santambrogio2015optimal}. This completes our proof.
\end{proof}
An important remark is that the measurable function $x(\xi)$ obtained with (\ref{transf}) is in general not unique. Any measure-preserving rearrangement of $x(\xi)$ gives the same measure $\mu$. For example consider 
$\mu=\frac{1}{3}\delta_{-1}+\frac{2}{3}\delta_{1}$ then, by our standard choice of $x(\xi)$ we would obtain 
$$x(\xi)=\begin{cases}
    -1\qquad \text{if }\xi\in\left(0,\frac{1}{3}\right],\\
    1\qquad \text{if }\xi\in\left(\frac{1}{3},1\right]
\end{cases}$$
but it is clear that another suitable choice could be
$$\Tilde{x}(\xi)=\begin{cases}
    -1\qquad \text{if }\xi\in\left(\frac{1}{3},\frac{2}{3}\right],\\
    1\qquad \text{if }\xi\in\left(0,\frac{1}{3}\right]\cup\left(\frac{2}{3},1\right]
\end{cases}$$
or $\Bar{x}(\xi)=x(\xi+c)$ for all $c\in\R$. This phenomenon of non-uniqueness is the reason why the \eqref{MFL} contains less detailed information compared to \eqref{CL}: many solutions of the first correspond to the same solution of second, leading to a loss of information. This is quite natural as the continuum limit \eqref{CL} still contains a heterogeneous tracking variable for each node/agent.\\

As we already mentioned, we are particularly interested in periodic phase space given by the circle $\mathbb{S}^1$ and from now on we will consider only this setting. This is because the Kuramoto model has as variables the phases of the oscillators, which belong naturally to $\mathbb{S}^1$. For this reason we have to reinterpret the following pages and the previous proposition has to be adjusted due to technical reasons.
\begin{definition}
We define the \textit{circular} cumulative distribution function (CCDF) starting from $a\in[0,2\pi]$ of the measure $\mu$ with density $f$ as 
$$F_{\mu,a}:\mathbb{S}^1\equiv[a,2\pi+a]/\sim \longrightarrow[0,1]/\sim$$
    $$x\longrightarrow F_{\mu,a}(x)=\int_a^xf(y)\;\textnormal{d}y$$
 where $\sim$ is the equivalence relation that identifies the endpoints of the interval.\end{definition}
 We had the need to identify even $I=[0,1]$ because we wanted $F_{\mu,a}$ to be well defined and its inverse to be periodically defined. Notice that the continuum limit PDE \eqref{CL} admits solutions even if the domain is periodic, by standard arguments, following the one contained in the seminal paper \cite{medvedev2014nonlinear}.\\

 In the following, we will always consider $a=0$ and we will denote $F_{\mu,0}\equiv F_\mu$.
Then, concerning the periodic case, we have the following result:
\begin{proposition}\label{prop2}
     Assume the previous hypothesis on $\mathbb{T}$ instead of $\R$ and define $x_t(\xi)$ as 
     $$\Tilde{x}_t(\xi)=F^{-1}_{t}(\xi+C(t))$$
     with $\xi\in[0,1]$ and $C(t)=\int_a^tf_s(0)\int_0^{2\pi}\phi(0,y)f_s(y)\;\textnormal{d}y\;\textnormal{d}s$. Moreover $F^{-1}_t$ is again the inverse of the CDF relative to $f_t$, solution of \eqref{MFL}. Then we have that $x_t(\xi)$ solves
    $$\partial_t \Tilde{x}(t,\xi)=\int_I\phi(\Tilde{x}(t,\xi),\Tilde{x}(t,z))\textnormal{d}z\qquad\text{for a.e. }t\in[0,T].$$
\end{proposition}
\begin{proof}
We start by defining 
    $$\Tilde{F}_t(x)\coloneqq F_t(x)-C(t)$$
so that, as before
$$\partial_t \Tilde{F}_t(x)=-f_t(x)\int_I\phi(x,x(t,\xi))\textnormal{d}\xi$$
in the scalar distribution sense on $[0, T]$. Here we have used the presence of the extra term $C(t)$ in order to cancel the boundary term obtained by the integration. Moreover using that $\partial_x \Tilde{F}_t=\partial_x F_t$ we obtain
$$\partial_t \Tilde{x}(t,\xi)=\int_I\phi(\Tilde{x}(t,\xi),x(t,z)) \textnormal{d}z=\int_I\phi(\Tilde{x}(t,\xi),\Tilde{x}(t,z-C(t))) \textnormal{d}z=\int_I\phi(\Tilde{x}(t,\xi),\Tilde{x}(t,z)) \textnormal{d}z$$
where for the last equality we have used the periodicity of $\Tilde{x}(\xi)$.
\end{proof}
Note that for both previous propositions, if we have that $\phi$ is also Lipschitz continuous then one obtain by classical argument the uniqueness of a strong and weak solution for \eqref{CL}, therefore implying that our new distributional solutions can be interpreted as classical solutions.\\

Before proceeding with the next main result of the paper, we would like to briefly discuss two natural questions that arise from our previous propositions:
\begin{itemize}
    \item Is it possible to extend the previous results to $\R^d$ for $d\geq2$?
    \item What happens if we consider a system of non-exchangeable particles? Is it possible to generalize the latter formula in this case?
\end{itemize}
Regarding the first question, we believe that it is not possible to extend such a formula to $d \geq 2$, at least not in the simple way we have presented. There are several hints supporting this intuition. The first is that, in general, the set of measures
$$\left\{\;\int_I\delta_{x_0(\xi)}(x)\textnormal{d}\xi\;\bigg\vert\;x_0:I\rightarrow\R^d\;\text{measurable}\;\right\}$$
is just dense in $\mathcal{P}(\R^d)$ for the weak topology when $d\geq2$, which follows easily from the density of finite linear combination of Dirac's deltas and, furthermore, it is not possible to identify the function $x_0(\xi)$ as the quantile function of our measure. Indeed, the pseudo-inverse \eqref{transf} is not well-defined in the multidimensional case: such an infimum would not be unique, since for $d \geq 2$ this operation would identify an entire level set in $\R^d$.

Another topological argument against the possibility of such an extension is the following. If we consider a measure $\mu \in \mathcal{P}(\R^d)$ with a smooth regular density $f$ and its cumulative distribution function $F_\mu$, we would expect the pseudo-inverse to coincide with the true inverse of $F_\mu$, as shown before. However, this would define a homeomorphism between spaces of different dimensions, which is impossible. Hence, we would be forced to restrict our attention to pathological or singular densities, which already makes such an extension less useful and interesting.\\

We now turn our attention to the second question, that it has been already partially answered. Indeed if $\phi=\phi(\xi,\zeta,x,y)$, starting from a solution $x_t(\xi)$ of \eqref{CL} we can still obtain a solution $\nu_t(\xi,x)\in C([0,T];\mathcal{P}(I\times\R^d))$ of the non-exchangeable mean-field 
\begin{equation}\label{NMFL}
    \begin{cases}\displaystyle
        \partial_t \nu_t(\xi,x) +\nabla_x\cdot\left(\left(\int_{I\times\R^d}\phi(\xi,\zeta,x,y)\textnormal{d}\nu_t(\zeta,y)\textnormal{d}\zeta\right)\nu_t(\xi,x)\right)=0,\\
        \nu_{t=0}=\nu_0,
    \end{cases}
\end{equation}
in an analogous way of \ref{transf2}, precisely:
$$\nu_t(\xi,x)=\int_I\delta_{x(t,\zeta)}(x)\delta_\zeta(\xi)\textnormal{d}\zeta.$$
The other direction, i.e. from \eqref{NMFL} to the \eqref{CL}, is much more challenging and still open. Indeed a general answer has been given only if the interaction function can be written as the
product of a function of $\xi$, $\zeta$ and of a linear function of $x$, $y$, i.e.,
$$\phi(\xi,\zeta,x,y)=\Tilde{\phi}(\xi,\zeta)(\lambda_1x+\lambda_2y)$$
with $\Tilde{\phi}:I^2\rightarrow\R$ and $(\lambda_1,\lambda_2)\in\R^2$. This form is actually common in models for opinion dynamics with linear interaction of the type Hegselmann-Krause, see \cite{hegselmann2015opinion}, but for the general non-linear case there is still no answer, see \cite{paul2022microscopic} for a complete discussion.\\

Now that we have proved and discussed our propositions regarding solutions, it is natural to try to utilize them to known objects of \eqref{MFL} in order to get new ones for the \eqref{CL}. Such objects could be geometrical structures such as invariant manifolds. So far, the question how to link the mean-field and continuum limit invariant manifolds was completely open. Below we are going to prove, how to transform the most studied and remarkable invariant manifold of \eqref{MFL}, namely the Ott-Antonsen manifold to an unstable manifold for the continuum limit.\jump\jump
\textbf{\textit{Structure of the unstable invariant manifold
of the mean-field equation.}}
When we consider \eqref{MFL-Kura} with $\omega$ set to zero and, for simplicity $K=1$, we obtain the mean-field limit equation of a system of identical phase oscillators
\begin{equation}\label{hom-MFL-Kura}
    \partial_t f(\theta,t)=\partial_\theta\left[f(\theta,t)\left(\int_0^{2\pi} f(\phi,t)\sin(\theta-\phi)\textnormal{d}\phi\right)\right].
\end{equation}
For this particular form of the equation the Ott-Antonsen (OA) manifold can be explicitly written as follows
\begin{align*}
        \begin{split}
            \mathcal{M}_{OA}=&\left\{\frac{1}{2\pi}+\sum_{n=0}^\infty \beta^n\cos(n(\theta+\alpha))\;\bigg\vert\;\beta\in[0,1),\;\alpha\in[-\pi,\pi]\right\}\\
            =&\left\{\frac{1}{2\pi}\frac{1-\beta^2}{1-2\beta\cos(\alpha+\theta)+\beta^2}\;\bigg\vert\;\beta\in[0,1),\;\alpha\in[-\pi,\pi]\right\}.
        \end{split}
    \end{align*}
It is then a standard proof, following ideas working even for non-identical oscillators \cite{ott2008low,ott2009long}, to show that the dynamics on the OA manifold is given by the ODEs
$$\begin{cases}
        \dot{\beta}=\frac{1}{2}\beta(1-\beta^2),\\
        \dot{\alpha}=0.
    \end{cases}$$
The ODEs can be explicitly solved. First, we find $\alpha(t)=\alpha(0)\in[-\pi,\pi]$ and then one directly verifies that the first ODE is solved by
$$\beta(t)=\frac{\beta(0)}{\sqrt{\beta(0)^2+(1-\beta(0)^2)e^{-t}}}.$$
From this explicit expression we can deduce that as $t\rightarrow-\infty$ we converge towards the incoherent equilibrium $\frac{1}{2\pi}$, which corresponds to $\beta=0$. Linearizing \ref{hom-MFL-Kura} around this equilibrium we obtain the linear operator
$$L_{\frac{1}{2\pi}}f(\theta)\coloneqq\frac{1}{2\pi}\int_0^{2\pi}\cos(\theta-\phi)f(\phi)$$
that on $L^2(\mathbb{S}^1)$ has
\begin{itemize}
    \item 2 unstable directions, given by $\sin(\theta)$ and $\cos(\theta)$, with eigenvalue $\frac{1}{2}$,
    \item infinitely many central directions, given by the remaining elements of the trigonometric basis, namely $\sin(n\theta)$, $\cos(n\theta)$ for $n\geq2$ and $n=0$. These directions all have eigenvalue equal to $0$.
\end{itemize}
So, by a standard invariant manifold theorem, see \cite{sell2013dynamics} or \cite{henry2006geometric}, there exists a two-dimensional unstable manifold $W^\u(\frac{1}{2\pi})$ of the mean-field limit that can be identified with $\mathcal{M}_{OA}$, in view of the previous analysis and uniqueness of the unstable manifold \cite[Lemma 71.2]{sell2013dynamics}. From now on we will denote the typical element of this manifold with $f_{\alpha,\beta}(\theta)$, where the indexes will be omitted if not explicitly needed.

As discussed above, we want to apply Proposition \ref{prop2} to the $\mathcal{M}_{OA}$ and obtain a new invariant manifold of the continuum limit \eqref{CL}. In particular, the continuum limit PDE that we work with is
\begin{equation}\label{CL-Kura}
    \partial_t x(t,\xi)=\int_0^1 \sin\big(x(t,\xi)-x(t,z)\big)\textnormal{d}z.
\end{equation}
In order to do what we have planned, we claim that the closed form of the CCDF of an element of $\mathcal{M}_{OA}$ is computable as
    \begin{align*}
        \begin{split}
            F_{\alpha,\beta}(\theta)=&\int_0^\theta f_{\alpha,\beta}(\phi)\;\textnormal{d}\phi=\frac{\theta}{2\pi}+\sum_{n=1}^\infty \frac{\beta^n}{n}\big(\sin(n(\theta+\alpha)-\sin(n\alpha)\big)=\\
            =& \frac{\theta}{2\pi}+\frac{1}{\pi}\left(\arctan\left(\frac{\beta\sin(\theta+\alpha)}{1-\beta\cos(\theta+\alpha)}\right)-\arctan\left(\frac{\beta\sin(\alpha)}{1-\beta\cos(\alpha)}\right)\right)
        \end{split}
    \end{align*}
where we have interchanged integration and summation using the
dominated convergence theorem. Moreover, we have used that, for $|x|<1$, we have the following equalities
$$\sum_{n=1}^\infty\frac{x^n}{n}\sin(n\psi)=\textnormal{Im}\left(\sum_{n=1}^\infty\frac{x^n}{n}\e^{\i n\psi}\right)=\textnormal{Im}\left(-\ln\left(1-x\e^{\i\psi}\right)\right)=\arctan\left(\frac{x\sin\psi}{1-x\cos\psi}\right),$$
because of the series expansion of the logarithm and the last equality is due to its definition in $\C$. It is possible to prove that the inverse of $F_{\alpha,\beta}$ has a closed form too, and it is given by
$$F^{-1}_{\alpha,\beta}(\xi)=\begin{cases}
        G_{\alpha,\beta}(\xi)\qquad\qquad\; \text{if}\;\xi\in[0,c]\\
        G_{\alpha,\beta}(\xi)+2\pi\qquad \text{if}\;\xi\in[c,1]
    \end{cases}$$
where we have that
$$G_{\alpha,\beta}(\xi)\coloneqq2\arctan\left(\frac{1-\beta}{1+\beta}\tan\left(\pi\xi+\arctan\left(\frac{1+\beta}{1-\beta}\tan\left(\frac{\alpha}{2}\right)\right)\right)\right)-\alpha$$
    $$c\coloneqq\frac{1}{2}-\frac{1}{\pi}\arctan\left(\frac{1+\beta}{1-\beta}\tan\left(\frac{\alpha}{2}\right)\right).$$
The interested reader can find the detailed computations in the appendix. Since the function's output is on a periodic domain, the vertical shift of $2\pi$ at $c$ does not change the final point. However, we apply this shift to create a function that is continuous on the standard interval $[0,2\pi]\subset\R$ without needing to consider the endpoints as identical.\\

Before proceeding with the main theorem, we would like to see how the integral vector field of \eqref{CL} acts on a general function $F_{\alpha,\beta}^{-1}(\xi)$. What we obtain is the following lemma.
\begin{lemma}\label{lemma}
Consider the function $F^{-1}_{\alpha,\beta}(\xi)$, previously defined. Then $\forall \alpha\in[-\pi,\pi],\forall\beta\in[0,1)$ and $\forall\xi\in[0,1]$ the following equality holds:
$$\int_0^1\sin\left(F^{-1}_{\alpha,\beta}(z)-F^{-1}_{\alpha,\beta}(\xi)\right)\;\textnormal{d}z=-\beta \sin\left(\alpha+F^{-1}_{\alpha,\beta}(\xi)\right)$$
\end{lemma}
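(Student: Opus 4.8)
The plan is to reduce the integral over the label variable $z\in[0,1]$ to an integral over the phase variable $\theta\in[0,2\pi]$ weighted by the density $f_{\alpha,\beta}$, after which the statement collapses to a single Fourier computation. Concretely, fix $\xi\in[0,1]$ and abbreviate $\psi\coloneqq F^{-1}_{\alpha,\beta}(\xi)$, which is merely a constant for the purpose of the $z$-integration. Exactly as in the proof of Proposition \ref{prop1}, the map $F^{-1}_{\alpha,\beta}$ pushes the Lebesgue measure on $I=[0,1]$ forward to $f_{\alpha,\beta}(\theta)\,\textnormal{d}\theta$ (recall $f_{\alpha,\beta}>0$, so $F_{\alpha,\beta}$ is a strictly increasing bijection and the pseudo-inverse is the genuine inverse). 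Since $\sin$ is $2\pi$-periodic, the extra vertical shift of $2\pi$ in the definition of $F^{-1}_{\alpha,\beta}$ on $[c,1]$ is immaterial, and the change of variables yields
$$\int_0^1\sin\left(F^{-1}_{\alpha,\beta}(z)-\psi\right)\textnormal{d}z=\int_0^{2\pi}\sin(\theta-\psi)\,f_{\alpha,\beta}(\theta)\,\textnormal{d}\theta.$$

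The second step is to evaluate this moment. I would write $\sin(\theta-\psi)=\textnormal{Im}\,(\e^{\i(\theta-\psi)})$ and pull the constant $\e^{-\i\psi}$ out of the integral, so that everything reduces to the first complex Fourier moment $\int_0^{2\pi}\e^{\i\theta}f_{\alpha,\beta}(\theta)\,\textnormal{d}\theta$ — this is precisely (the conjugate of) the Kuramoto order parameter of the state $f_{\alpha,\beta}$. Inserting the Poisson-kernel Fourier expansion of $f_{\alpha,\beta}$ and using orthogonality of $\{\e^{\i n\theta}\}$ on $[0,2\pi]$, only the $n=1$ mode survives and one obtains $\int_0^{2\pi}\e^{\i\theta}f_{\alpha,\beta}(\theta)\,\textnormal{d}\theta=\beta\,\e^{-\i\alpha}$ (equivalently the order parameter equals $\beta\,\e^{\i\alpha}$, which is the familiar geometric meaning of the OA coordinates).

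Finally, combining the two steps,
$$\int_0^1\sin\left(F^{-1}_{\alpha,\beta}(z)-\psi\right)\textnormal{d}z=\textnormal{Im}\left(\e^{-\i\psi}\,\beta\,\e^{-\i\alpha}\right)=\beta\,\textnormal{Im}\left(\e^{-\i(\alpha+\psi)}\right)=-\beta\sin(\alpha+\psi),$$
and substituting back $\psi=F^{-1}_{\alpha,\beta}(\xi)$ gives the claimed identity.

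The load-bearing step is the first one: the pushforward identity that converts the label integral into a density-weighted phase integral; everything downstream is elementary. The only points requiring care are (i) justifying the interchange of summation and integration when inserting the Fourier series, which follows from dominated convergence exactly as in the earlier closed-form computation of the CCDF, since $\beta<1$ forces geometric decay of the coefficients, and (ii) confirming that the $2\pi$-jump in $F^{-1}_{\alpha,\beta}$ at $c$ does not corrupt the change of variables — which it cannot, precisely because the integrand depends on $F^{-1}_{\alpha,\beta}$ only through the $2\pi$-periodic function $\sin$. I do not anticipate any serious analytic obstacle beyond these bookkeeping points.
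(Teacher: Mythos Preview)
Your proof is correct and follows essentially the same route as the paper: the same pushforward change of variable $u=F^{-1}_{\alpha,\beta}(z)$, then $\sin(u-v)=\textnormal{Im}(\e^{\i(u-v)})$, then evaluation of $\int_0^{2\pi}\e^{\i u}f_{\alpha,\beta}(u)\,\textnormal{d}u=\beta\e^{-\i\alpha}$. The only difference is in that last evaluation: the paper computes it via the residue theorem on the closed form of the Poisson kernel, whereas you use the Fourier expansion and orthogonality --- an alternative the paper itself explicitly mentions as equally viable.
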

\begin{proof}
Let us start with the natural change of variable $F^{-1}_{\alpha,\beta}(z)=u$ so that we obtain
\begin{align*}
    \begin{split}
    \int_0^1\sin\left(F^{-1}_{\alpha,\beta}(z)-F^{-1}_{\alpha,\beta}(\xi)\right)\;\textnormal{d}z=&\int^{2\pi}_0\sin{\left(u-v\right)}f_{\alpha,\beta}(u)\textnormal{d}u\\
    =&\;\frac{1-\beta^2}{2\pi}\int^{2\pi}_0\frac{\sin{\left(u-v\right)}}{1-2\beta\cos(\alpha+u)+\beta^2}\textnormal{d}u
    \end{split}
\end{align*}
where, for notational convenience, we have denoted $v=F^{-1}_{\alpha,\beta}(\xi)$. The most elegant way to compute the integral on the right hand side is considering it as a complex integral. Another way of doing it could be to use the series expansion of $f_{\alpha,\beta}$ and take advantage of the orthogonality of the trigonometric basis. So using that $\sin(u-v)=\textnormal{Im}(\e^{\i(u-v)})$ we just have to compute the following integral
$$\int^{2\pi}_0\frac{\e^{\i u}}{1-2\beta\cos(\alpha+u)+\beta^2}\textnormal{d}u.$$
We will now compute the contour integral imposing the complex substitution $z=\e^{\i u}$ obtaining
$$\int^{2\pi}_0\frac{\e^{\i u}}{1-2\beta\cos(\alpha+u)+\beta^2}\textnormal{d}u=\frac{1}{\i}\int_{|z|=1}\frac{z}{(1-\beta \e^{\i\alpha}z)(z-\beta \e^{-\i\alpha})}\textnormal{d}z$$
where we have used the equality $$1-2\beta\cos(\alpha+u)+\beta^2=(1-\beta \e^{\i(\alpha+u)})(1-\beta \e^{-\i(\alpha+u)}).$$
Now the integrand has two simple poles, namely $z_1=\beta \e^{-\i\alpha}$ and $z_2=(\beta \e^{\i\alpha})^{-1}$ but only $z_1$ is inside the unit disc, since $\beta<1$. So using the residue theorem we obtain that
$$\frac{1}{\i}\int_{|z|=1}\frac{z}{(1-\beta \e^{\i\alpha}z)(z-\beta \e^{-\i\alpha})}\textnormal{d}u=2\pi \frac{z}{1-\beta \e^{\i\alpha}z}\bigg\vert_{z=z_1}=2\pi\frac{\beta \e^{-\i\alpha}}{1-\beta^2}$$
and finally, putting all together we obtain 
\begin{align*}
    \begin{split}
        &\int_0^1\sin\left(F^{-1}_{\alpha,\beta}(z)-F^{-1}_{\alpha,\beta}(\xi)\right)\;\textnormal{d}z=\frac{1-\beta^2}{2\pi}\textnormal{Im}\left(e^{-\i v}\int^{2\pi}_0\frac{\e^{\i u}}{1-2\beta\cos(\alpha+u)+\beta^2}\textnormal{d}u\right)=\\
        =&\frac{1-\beta^2}{2\pi}\textnormal{Im}\left(\e^{-\i v}2\pi\frac{\beta \e^{-\i\alpha}}{1-\beta^2}\right)=\beta\textnormal{Im}\left(\e^{-\i(\alpha+v)}\right)=-\beta\sin\left(\alpha+F^{-1}_{\alpha,\beta}(\xi)\right)
    \end{split}
\end{align*}
as desired.
\end{proof}
The next theorem is about an explicit characterization for the invariant manifold of the so called \textit{twisted state} $y_q(\xi)\coloneqq2\pi\xi+q$ with $q\in\mathbb{T}$, equilibria of the system (\ref{CL-Kura}). Such equilibria arise as the continuum limit of their discrete N-dimensional counterpart
$$y^N_q=\left(0,\frac{2\pi}{N}+q,\ldots,\frac{2\pi}{N}(N-1)+q\right)$$
and their stability have been studied in \cite{wiley2006size}. Actually, the states $y_q$ are just a particular case of the general twisted states, defined as $y_{q,m}(\xi)=2\pi\xi m+q$, where the parameter $m\in\Z$ accounts for the number of twists the solution makes around the circle as $\xi$ varies from $0$ to $1$. Their properties and non-linear stability has been investigated in \cite{bick2023phase,medvedev2015stability}. Furthermore their role in the Kuramoto model on small-world graphs has been studied in \cite{medvedev2014small}.

Moreover this whole family of equilibria can be obtained from the incoherent equilibrium $\frac{1}{2\pi}$ using the transformation defined in \ref{prop2} and one can obtain the latter equilibrium applying \eqref{transf2} to $y_q(\xi)$. We are now ready to state and prove our main theorem.
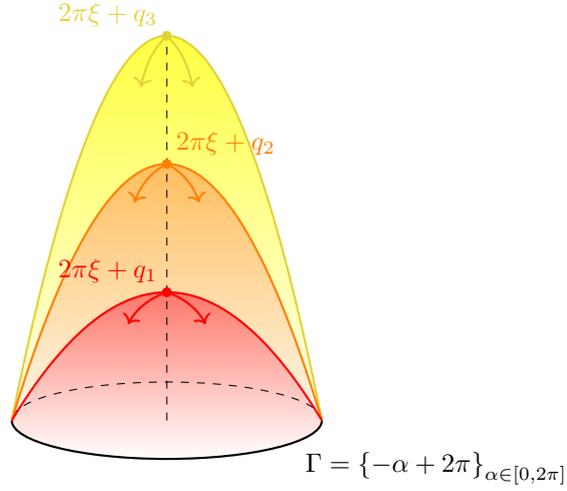
\begin{figure}[h]
\centering
\hspace*{3.3cm}\begin{tikzpicture}[scale=1.7]
\def\a{1.2} 
\def\b{0.3} 
\fill[white, shading=radial, top color=yellow!100, bottom color=yellow!00,opacity=0.75]
  plot[domain=-\a:\a,samples=100] (\x, {(3*(1 - (\x/\a)^2))})-- (\a,0) arc[start angle=0, end angle=-180, x radius=\a, y radius=\b]-- cycle;
\fill[white, shading=radial, top color=orange!100, bottom color=orange!0, opacity=0.5]
  plot[domain=-\a:\a,samples=100] (\x, {(2*(1 - (\x/\a)^2))}) -- (\a,0) arc[start angle=0, end angle=-180, x radius=\a, y radius=\b] -- cycle; 
\fill[white, shading=radial, top color=red!100, bottom color=red!0, opacity=0.5]
  plot[domain=-\a:\a,samples=100] (\x, {(1*(1 - (\x/\a)^2))}) -- (\a,0) arc[start angle=0, end angle=-180, x radius=\a, y radius=\b] -- cycle;
\draw[dashed] (\a,0) arc [start angle=0,end angle=180,x radius=\a,y radius=\b];
\draw[thick] (-\a,0) arc [start angle=180,end angle=360,x radius=\a,y radius=\b];
\draw[thick, yellow!85!black]
  plot[domain=-1*\a:1*\a,samples=100] (\x, {(3*(1 - 1*(\x/\a)^2))});
\draw[thick, orange]
  plot[domain=-1*\a:1*\a,samples=100] (\x, {(2*(1 - 1*(\x/\a)^2))});
\draw[thick, red]
  plot[domain=-\a:\a,samples=100] (\x, {((1 - (\x/\a)^2))});
\draw[dashed] (0,0) -- (0,3);
\node[yellow!85!black, above left] at (0,3) {$2\pi\xi+q_3$};
\node[orange, above right] at (0,2) {$2\pi\xi+q_2$};
\node[red, above left] at (0,1) {$2\pi\xi+q_1$};
\node[below right] at (1,-5pt) {$\Gamma=\left\{-\alpha+2\pi\right\}_{\alpha\in[0,2\pi]}$};
\filldraw[yellow!85!black, fill=yellow!85!black, thick] (0,3) circle (0.03);
\filldraw[orange, fill=orange, thick] (0,2) circle (0.03);
\filldraw[red, fill=red, thick] (0,1) circle (0.03);
\coordinate (A1) at (0,3);

\coordinate (B1) at (0.20,2.6);
\coordinate (B2) at (-0.20,2.6);

\draw[->, yellow!85!black, thick]
  (A1) .. controls (0.05,2.95) and (0.15,2.85) .. (B1);

\draw[->, yellow!85!black, thick]
  (A1) .. controls (-0.05,2.95) and (-0.15,2.85) .. (B2);

\coordinate (A1) at (0,2);

\coordinate (B1) at (0.25,1.7);
\coordinate (B2) at (-0.25,1.7);

\draw[->, orange, thick]
  (A1) .. controls (0.10,1.95) and (0.20,1.85) .. (B1);

\draw[->, orange, thick]
  (A1) .. controls (-0.10,1.95) and (-0.2,1.85) .. (B2);

\coordinate (A1) at (0,1);

\coordinate (B1) at (0.30,0.75);
\coordinate (B2) at (-0.30,0.75);

\draw[->, red, thick]
  (A1) .. controls (0.15,0.95) and (0.25,0.85) .. (B1);

\draw[->, red, thick]
  (A1) .. controls (-0.15,0.95) and (-0.25,0.85) .. (B2);

\end{tikzpicture}
\caption{Ideal graphic representation of the unstable manifolds $W^\u(y_q(\xi))$, for three different values of $q$, and their common asymptotic curve $\Gamma$.}
\label{fig:1}
\centering
\end{figure}
\begin{theorem}
    The family of incoherent equilibria $y_q(\xi)\coloneqq2\pi\xi+q$ of (\ref{CL-Kura}), with $q\in\mathbb{T}$ have an explicit expression for their 2-dimensional unstable manifolds, given by
    $$W^\u(y_q(\xi))=\left\{F^{-1}_{\alpha,\beta}\big(\xi+C(\alpha,\beta)+q\big)\bigg\vert\;\beta\in[0,1),\;\alpha\in[-\pi,\pi]\right\}$$
    with
    $$C(\alpha,\beta)=-\frac{1}{\pi}\arctan\left(\frac{\beta\sin(\alpha)}{1-\beta\cos(\alpha)}\right).$$
    Moreover, the parameters $(\alpha,\beta)$ that define the two manifolds, satisfy the same system of ODEs, namely
    $$\begin{cases}
        \dot{\beta}=\frac{1}{2}\beta(1-\beta^2),\\
        \dot{\alpha}=0.
    \end{cases}$$
\end{theorem}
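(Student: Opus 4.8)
The plan is to realise the manifold as a time-dependent family of profiles and then pin it down by uniqueness. Fix $q$ and set
$$x_{\alpha,\beta}(\xi)\coloneqq F^{-1}_{\alpha,\beta}\big(\xi+C(\alpha,\beta)+q\big),\qquad \beta\in[0,1),\ \alpha\in[-\pi,\pi],$$
let $(\alpha(t),\beta(t))$ evolve by the proposed ODEs, and write $x(t,\xi)\coloneqq x_{\alpha(t),\beta(t)}(\xi)$. Conceptually this family is exactly the image of $\mathcal{M}_{OA}=W^\u(\tfrac{1}{2\pi})$ under the label-to-state map of Proposition \ref{prop2}: the twist $q$ encodes the phase-shift freedom, while $C(\alpha,\beta)$ is the $\theta$-independent part of the CCDF $F_{\alpha,\beta}$ and matches the Proposition \ref{prop2} constant $C(t)$ up to an additive constant (one verifies that $\tfrac{d}{dt}C(\alpha(t),\beta(t))$ equals the integrand of the Proposition \ref{prop2} constant, by the same residue computation underlying Lemma \ref{lemma}). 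Granting this, Proposition \ref{prop2} already guarantees that $x(t,\xi)$ solves \eqref{CL-Kura} with the same $(\alpha(t),\beta(t))$ solving the Ott--Antonsen ODEs. I would nonetheless verify this directly, since the direct computation also forces the ODEs and thereby proves the second assertion.

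For the direct verification I compute the two sides of \eqref{CL-Kura} separately. On the right-hand side a periodic change of variables $z\mapsto z+C(\alpha,\beta)+q$ absorbs the shift, and Lemma \ref{lemma} evaluates the integral term in closed form as $-\beta\sin\!\big(\alpha+x(t,\xi)\big)$. On the left-hand side I differentiate in $t$ the defining identity $F_{\alpha(t),\beta(t)}\big(x(t,\xi)\big)=\xi+C(\alpha(t),\beta(t))+q$; using $\partial_\theta F_{\alpha,\beta}=f_{\alpha,\beta}$ and solving for $\partial_t x$ gives
$$\partial_t x=\frac{1}{f_{\alpha,\beta}(x)}\Big[\big(\partial_\alpha C-\partial_\alpha F\big)\dot\alpha+\big(\partial_\beta C-\partial_\beta F\big)\dot\beta\Big],$$
the derivatives of $F$ being evaluated at $\theta=x$. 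The crucial simplification is that $C(\alpha,\beta)$ is precisely the constant term of $F_{\alpha,\beta}$, so each $\partial_\bullet C-\partial_\bullet F$ reduces to minus the derivative of the arctangent term; differentiating $\arctan\!\big(\tfrac{\beta\sin(\theta+\alpha)}{1-\beta\cos(\theta+\alpha)}\big)$ and multiplying by $1/f_{\alpha,\beta}$ --- a Poisson kernel --- collapses all denominators and leaves
$$\partial_t x=-\frac{2\beta\big(\cos(\alpha+x)-\beta\big)}{1-\beta^2}\,\dot\alpha-\frac{2\sin(\alpha+x)}{1-\beta^2}\,\dot\beta.$$
As $\xi$ ranges over $[0,1]$ the value $x=x(t,\xi)$ sweeps all of $\mathbb{S}^1$, so equating this with $-\beta\sin(\alpha+x)$ is an identity in $x$; matching the linearly independent functions $1$, $\cos(\alpha+x)$ and $\sin(\alpha+x)$ forces $\dot\alpha=0$ and $\dot\beta=\tfrac12\beta(1-\beta^2)$. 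This settles that the family solves \eqref{CL-Kura}, that it is flow-invariant, and that its dynamics are exactly the Ott--Antonsen ODEs.

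It remains to identify the invariant family with $W^\u(y_q)$. I would linearise \eqref{CL-Kura} at $y_q$: writing $x=y_q+\varepsilon v$ the zeroth order vanishes (so $y_q$ is an equilibrium) and the linear part is the rank-two operator $v\mapsto\int_0^1\cos\!\big(2\pi(\xi-z)\big)v(z)\,\d z$, whose spectrum is $\tfrac12$ on the two-dimensional eigenspace $\mathrm{span}\{\cos2\pi\xi,\sin2\pi\xi\}$ and $0$ on every other trigonometric mode --- the exact analogue of the mean-field operator $L_{\frac{1}{2\pi}}$. The same invariant manifold theorem invoked above for $\tfrac{1}{2\pi}$ then yields a unique two-dimensional unstable manifold tangent to that eigenspace. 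To match the candidate, note $x_{\alpha,0}=y_q$, and that although the $(\alpha,\beta)$ chart degenerates at $\beta=0$ like polar coordinates (indeed $\partial_\alpha x_{\alpha,\beta}|_{\beta=0}=0$), the first-order term $\partial_\beta x_{\alpha,\beta}|_{\beta=0}=-2\sin(\alpha+y_q)$ is a nonzero combination of $\cos2\pi\xi$ and $\sin2\pi\xi$; passing to Cartesian parameters $(\beta\cos\alpha,\beta\sin\alpha)$ exhibits the family as a genuine smooth two-dimensional surface through $y_q$ with tangent plane $\mathrm{span}\{\cos2\pi\xi,\sin2\pi\xi\}$. Uniqueness of the unstable manifold then identifies it with $W^\u(y_q)$.

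The step I expect to be most delicate is the bookkeeping in the second paragraph: recognising that $C(\alpha,\beta)$ is exactly the constant term of $F_{\alpha,\beta}$ so that it cancels in $\partial_\bullet C-\partial_\bullet F$, and then carrying the cancellation between the arctangent derivative and the Poisson-kernel density far enough that the $\cos(\alpha+x)$ and $\sin(\alpha+x)$ harmonics decouple and pin down both ODEs at once. A secondary difficulty is the coordinate degeneracy at $\beta=0$, which must be resolved via the Cartesian reparametrisation before the tangency and uniqueness argument can be applied.
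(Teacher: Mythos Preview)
Your proposal is correct and follows essentially the same route as the paper: both compute the right-hand side via Lemma~\ref{lemma} (after a periodic shift), differentiate the implicit relation $F_{\alpha,\beta}(x)=\xi+C+q$ for the left-hand side, cancel the Poisson-kernel denominator against $f_{\alpha,\beta}$, and match harmonics in $\alpha+x$ to extract the Ott--Antonsen ODEs. You go slightly further than the paper's own proof by linearising \eqref{CL-Kura} at $y_q$ directly, verifying tangency via the Cartesian reparametrisation at $\beta=0$, and invoking uniqueness of the unstable manifold; the paper's proof stops once the ODEs are obtained and leaves this identification implicit, relying instead on the pre-theorem discussion where the analogous argument is carried out on the MFL side for $\mathcal{M}_{OA}$ and then transported via Proposition~\ref{prop2}.
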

\begin{proof}
    We start by noticing the really simple fact
    $$F^{-1}_{\alpha,0}\big(\xi+C(\alpha,0)+q\big)=y_q(\xi)\qquad\forall\xi\in[0,1],\alpha\in[-\pi,\pi],$$
    so that the equilibria belong indeed to our manifolds. Moreover, Lemma \ref{lemma} holds identically up to translations, due to the periodicity of $F^{-1}_{\alpha,\beta}$. So that, by plugging in the expression of the typical element of $W^\u(y_q(\xi))$ in \eqref{CL-Kura} and differentiating we obtain
    \begin{align}\label{eq:1}
        \begin{split}
            -&\beta \sin\left(\alpha+\Theta_{\alpha,\beta}(\xi)\right)=\\
            &=\dot{\alpha}\partial_\alpha\Theta_{\alpha,\beta}(\xi)+\dot{\beta}\partial_\beta\Theta_{\alpha,\beta}(\xi)+\partial_\xi\Theta_{\alpha,\beta}(\xi)\left[\dot{\alpha}\partial_\alpha C(\alpha,\beta)+\dot{\beta}\partial_\beta C(\alpha,\beta)\right]=\\
            &=\dot{\alpha}\left(-\frac{\partial_\alpha F_{\alpha,\beta}(\Theta_{\alpha,\beta})}{\partial_x F_{\alpha,\beta}(\Theta_{\alpha,\beta})}\right)+\dot{\beta}\left(-\frac{\partial_\beta F_{\alpha,\beta}(\Theta_{\alpha,\beta})}{\partial_x F_{\alpha,\beta}(\Theta_{\alpha,\beta})}\right)+\frac{1}{\partial_x F_{\alpha,\beta}(\Theta_{\alpha,\beta})}\left[\dot{\alpha}\partial_\alpha C+\dot{\beta}\partial_\beta C\right],
        \end{split}
    \end{align}
    where we have denoted $\Theta_{\alpha,\beta}(\xi)=F^{-1}_{\alpha,\beta}(\xi+C(\alpha,\beta)+q)$. By straightforward but not quick computations one can prove that the following equalities hold:
    $$\partial_\theta F_{\alpha,\beta}(\theta)=f_{\alpha,\beta}(\theta),$$
    $$\partial_\alpha F_{\alpha,\beta}(\theta)=\frac{1}{\pi}\left(\frac{\beta\cos(\theta+\alpha)-\beta^2}{1-2\beta\cos(\theta+\alpha)+\beta^2}-\frac{\beta\cos(\alpha)-\beta^2}{1-2\beta\cos(\alpha)+\beta^2}\right),$$
    $$\partial_\beta F_{\alpha,\beta}(\theta)=\frac{1}{\pi}\left(\frac{\sin(\theta+\alpha)}{1-2\beta\cos(\theta+\alpha)+\beta^2}-\frac{\sin(\alpha)}{1-2\beta\cos(\alpha)+\beta^2}\right)$$
    and using very similar computations it follows also that 
    $$\partial_\alpha C(\alpha,\beta)=-\frac{1}{\pi}\frac{\beta\cos(\alpha)-\beta^2}{1-2\beta\cos(\alpha)+\beta^2},$$
    $$\partial_\beta C(\alpha,\beta)=-\frac{1}{\pi}\frac{\sin(\alpha)}{1-2\beta\cos(\alpha)+\beta^2}.$$
    Substituting such expressions in \eqref{eq:1} we obtain 
    \begin{align*}
        \begin{split}
            -\beta\sin&\left(\alpha+\Theta_{\alpha,\beta}\right)=\frac{-1}{\pi f_{\alpha,\beta}(\Theta_{\alpha,\beta})}\left[\frac{\beta\cos(\Theta_{\alpha,\beta}+\alpha)-\beta^2}{1-2\beta\cos(\Theta_{\alpha,\beta}+\alpha)+\beta^2}\dot{\alpha}+\frac{\sin(\Theta_{\alpha,\beta}+\alpha)}{1-2\beta\cos(\Theta_{\alpha,\beta}+\alpha)+\beta^2}\dot{\beta}\right]\\
            =&\frac{-2}{1-\beta^2}\left[(\beta\cos(\alpha+\Theta_{\alpha,\beta})-\beta^2)\dot{\alpha}+\sin(\alpha+\Theta_{\alpha,\beta})\dot{\beta}\right]
        \end{split}
    \end{align*}
    so that, by choosing 
    $$\begin{cases}
        \dot{\beta}=\frac{1}{2}\beta(1-\beta^2),\\
        \dot{\alpha}=0,
    \end{cases}$$
    we have proved our statement.
\end{proof}
\section{Conclusion}
In this work, we have established a first direct dynamical correspondence between invariant manifolds of the mean-field and continuum limits of interacting particle systems (IPS). Specifically, in the particular case of Kuramoto-type systems, we have shown that the unstable manifold of the incoherent steady state of the continuum limit \eqref{CL} equation can be obtained explicitly from the celebrated Ott–Antonsen manifold of the mean-field limit \eqref{MFL}. This result provides a concrete example of how invariant geometric structures in different large-population limits can be dynamically related, extending the existing understanding of the OA manifold and its analogues.\\

It is remarkable how there are dynamical analogies between the two manifolds. From our analysis we can deduce that $W^\u(y_q(\xi))$ can not be a global attractor for any $q\in\R$. Indeed, all the manifolds share the same dynamical properties, making it impossible. This is related to the fact that the Ott-Antonsen manifold $\mathcal{M}_{OA}$ can be at best neutrally stable in the case of identical oscillators, see \cite{engelbrecht2020ott}. Furthermore, as $\mathcal{M}_{OA}$ is made out of orbits that connect the incoherent state $\frac{1}{2\pi}$ to the synchronous states $\delta_{\alpha_0}$ for some $\alpha_0\in[0,2\pi]$ as $\beta\rightarrow1$, the same happens on $W^\u(y_q(\xi))$. Once one fixes $q\in[0,2\pi]$, the orbits of $W^\u(y_q(\xi))$ connect the incoherent state $2\pi\xi+q$ to the synchronous states $-\alpha+2\pi$ as $\beta\rightarrow1$, independently from $q$, as illustrated in \ref{fig:1}.\\

Our work is part of the broader challenge to develop a unified framework for identifying, comparing, and linking invariant manifolds across the various limiting descriptions of IPS, from microscopic systems to mean-field and continuum PDEs. In particular, we plan to develop more general procedures to study the convergence of the invariant manifolds of finite $N$-dimensional system as $N\rightarrow+\infty$ towards the ones of the mean-field and continuum PDEs. A natural next step is to investigate, analytically and numerically, whether the same transformation applied to other invariant sets of the mean-field limit produces geometrical objects with analogous dynamical properties for the \eqref{CL} PDE.
\bibliographystyle{abbrv}
\bibliography{zReference}

@article{ott2008low,
  title={Low dimensional behavior of large systems of globally coupled oscillators},
  author={Ott, Edward and Antonsen, Thomas M},
  journal={Chaos: An Interdisciplinary Journal of Nonlinear Science},
  volume={18},
  number={3},
  year={2008},
  publisher={AIP Publishing}
}

@article{ott2009long,
  title={Long time evolution of phase oscillator systems},
  author={Ott, Edward and Antonsen, Thomas M},
  journal={Chaos: An interdisciplinary journal of nonlinear science},
  volume={19},
  number={2},
  year={2009},
  publisher={AIP Publishing}
}

@article{watanabe1993integrability,
  title={Integrability of a globally coupled oscillator array},
  author={Watanabe, Shinya and Strogatz, Steven H},
  journal={Physical review letters},
  volume={70},
  number={16},
  pages={2391},
  year={1993},
  publisher={APS}
}

@article{watanabe1994constants,
  title={Constants of motion for superconducting Josephson arrays},
  author={Watanabe, Shinya and Strogatz, Steven H},
  journal={Physica D: Nonlinear Phenomena},
  volume={74},
  number={3-4},
  pages={197--253},
  year={1994},
  publisher={Elsevier}
}

@article{vlasov2016dynamics,
  title={Dynamics of weakly inhomogeneous oscillator populations: perturbation theory on top of Watanabe--Strogatz integrability},
  author={Vlasov, Vladimir and Rosenblum, Michael and Pikovsky, Arkady},
  journal={Journal of Physics A: Mathematical and Theoretical},
  volume={49},
  number={31},
  pages={31LT02},
  year={2016},
  publisher={IOP Publishing}
}

@article{marvel2009identical,
  title={Identical phase oscillators with global sinusoidal coupling evolve by M{\"o}bius group action},
  author={Marvel, Seth A and Mirollo, Renato E and Strogatz, Steven H},
  journal={Chaos: An Interdisciplinary Journal of Nonlinear Science},
  volume={19},
  number={4},
  year={2009},
  publisher={AIP Publishing}
}

@article{cestnik2022exact,
  title={Exact finite-dimensional reduction for a population of noisy oscillators and its link to Ott--Antonsen and Watanabe--Strogatz theories},
  author={Cestnik, Rok and Pikovsky, Arkady},
  journal={Chaos: An Interdisciplinary Journal of Nonlinear Science},
  volume={32},
  number={11},
  year={2022},
  publisher={AIP Publishing}
}

@article{omel2022periodic,
  title={Periodic orbits in the Ott--Antonsen manifold},
  author={Omel’chenko, Oleh E},
  journal={Nonlinearity},
  volume={36},
  number={2},
  pages={845},
  year={2022},
  publisher={IOP Publishing}
}

@article{pietras2016ott,
  title={Ott-Antonsen attractiveness for parameter-dependent oscillatory systems},
  author={Pietras, Bastian and Daffertshofer, Andreas},
  journal={Chaos: An Interdisciplinary Journal of Nonlinear Science},
  volume={26},
  number={10},
  year={2016},
  publisher={AIP Publishing}
}

@article{engelbrecht2020ott,
  title={Is the Ott-Antonsen manifold attracting?},
  author={Engelbrecht, Jan R and Mirollo, Renato},
  journal={Physical Review Research},
  volume={2},
  number={2},
  pages={023057},
  year={2020},
  publisher={APS}
}

@article{medvedev2014nonlinear,
  title={The nonlinear heat equation on dense graphs and graph limits},
  author={Medvedev, Georgi S},
  journal={SIAM Journal on Mathematical Analysis},
  volume={46},
  number={4},
  pages={2743--2766},
  year={2014},
  publisher={SIAM}
}

@article{biccari2019dynamics,
  title={Dynamics and control for multi-agent networked systems: A finite-difference approach},
  author={Biccari, Umberto and Ko, Dongnam and Zuazua, Enrique},
  journal={Mathematical Models and Methods in Applied Sciences},
  volume={29},
  number={04},
  pages={755--790},
  year={2019},
  publisher={World Scientific}
}

@article{hegselmann2015opinion,
  title={Opinion dynamics and bounded confidence: models, analysis and simulation},
  author={Hegselmann, Rainer},
  journal={The Journal of Artificial Societies and Social Simulation},
  year={2015}
}

@article{青木一郎1982simulation,
  title={A simulation study on the schooling mechanism in fish.},
  author={I. Aoki},
  journal={Nippon Suisan Gakkaishi},
  volume={48},
  number={8},
  pages={1081--1088},
  year={1982},
  publisher={The Japanese Society of Fisheries Science}
}

@article{ballerini2008interaction,
  title={Interaction ruling animal collective behavior depends on topological rather than metric distance: Evidence from a field study},
  author={Ballerini, Michele and Cabibbo, Nicola and Candelier, Raphael and Cavagna, Andrea and Cisbani, Evaristo and Giardina, Irene and Lecomte, Vivien and Orlandi, Alberto and Parisi, Giorgio and Procaccini, Andrea and others},
  journal={Proceedings of the national academy of sciences},
  volume={105},
  number={4},
  pages={1232--1237},
  year={2008},
  publisher={National Academy of Sciences}
}

@article{lopez2012behavioural,
  title={From behavioural analyses to models of collective motion in fish schools},
  author={Lopez, Ugo and Gautrais, Jacques and Couzin, Iain D and Theraulaz, Guy},
  journal={Interface focus},
  volume={2},
  number={6},
  pages={693--707},
  year={2012},
  publisher={The Royal Society}
}

@inproceedings{kuramoto2005self,
  title={Self-entrainment of a population of coupled non-linear oscillators},
  author={Kuramoto, Yoshiki},
  booktitle={International symposium on mathematical problems in theoretical physics: January 23--29, 1975, kyoto university, kyoto/Japan},
  pages={420--422},
  year={2005},
  organization={Springer}
}

@article{barre2019modelling,
  title={Modelling pattern formation through differential repulsion},
  author={Barr{\'e}, Julien and Degond, Pierre and Peurichard, Diane and Zatorska, Ewelina},
  journal={arXiv preprint arXiv:1906.00704},
  year={2019}
}

@article{jabin2025mean,
  title={Mean-field limit of non-exchangeable systems},
  author={Jabin, Pierre-Emmanuel and Poyato, David and Soler, Juan},
  journal={Communications on Pure and Applied Mathematics},
  volume={78},
  number={4},
  pages={651--741},
  year={2025},
  publisher={Wiley Online Library}
}

@article{braun1977vlasov,
  title={The Vlasov dynamics and its fluctuations in the 1/N limit of interacting classical particles},
  author={Braun, Werner and Hepp, Klaus},
  journal={Communications in mathematical physics},
  volume={56},
  number={2},
  pages={101--113},
  year={1977},
  publisher={Springer}
}

@article{dobrushin1979vlasov,
  title={Vlasov equations},
  author={Dobrushin, Roland L’vovich},
  journal={Functional Analysis and Its Applications},
  volume={13},
  number={2},
  pages={115--123},
  year={1979},
  publisher={Springer}
}

@incollection{golse2016dynamics,
  title={On the dynamics of large particle systems in the mean field limit},
  author={Golse, Fran{\c{c}}ois},
  booktitle={Macroscopic and large scale phenomena: coarse graining, mean field limits and ergodicity},
  pages={1--144},
  year={2016},
  publisher={Springer}
}

@article{jabin2018quantitative,
  title={Quantitative estimates of propagation of chaos for stochastic systems with $W^{-1,\infty}$ kernels},
  author={Jabin, Pierre-Emmanuel and Wang, Zhenfu},
  journal={Inventiones mathematicae},
  volume={214},
  number={1},
  pages={523--591},
  year={2018},
  publisher={Springer}
}

@incollection{neunzert2006approximation,
  title={Die approximation der l{\"o}sung von integro-differentialgleichungen durch endliche punktmengen},
  author={Neunzert, Helmut and Wick, Joachim},
  booktitle={Numerische Behandlung nichtlinearer Integrodifferential-und Differentialgleichungen: Vortr{\"a}ge einer Tagung im Mathematischen Forschungsinstitut Oberwolfach, 2. 12.--7. 12. 1973},
  pages={275--290},
  year={2006},
  publisher={Springer}
}

@article{serfaty2020mean,
  title={Mean field limit for Coulomb-type flows},
  author={Serfaty, Sylvia},
  year={2020}
}

@incollection{sznitman2006topics,
  title={Topics in propagation of chaos},
  author={Sznitman, Alain-Sol},
  booktitle={Ecole d'{\'e}t{\'e} de probabilit{\'e}s de Saint-Flour XIX—1989},
  pages={165--251},
  year={2006},
  publisher={Springer}
}

@article{faure2015crowd,
  title={Crowd motion from the granular standpoint},
  author={Faure, Sylvain and Maury, Bertrand},
  journal={Mathematical Models and Methods in Applied Sciences},
  volume={25},
  number={03},
  pages={463--493},
  year={2015},
  publisher={World Scientific}
}

@article{cucker2007emergent,
  title={Emergent behavior in flocks},
  author={Cucker, Felipe and Smale, Steve},
  journal={IEEE Transactions on automatic control},
  volume={52},
  number={5},
  pages={852--862},
  year={2007},
  publisher={IEEE}
}

@article{vicsek1995novel,
  title={Novel type of phase transition in a system of self-driven particles},
  author={Vicsek, Tam{\'a}s and Czir{\'o}k, Andr{\'a}s and Ben-Jacob, Eshel and Cohen, Inon and Shochet, Ofer},
  journal={Physical review letters},
  volume={75},
  number={6},
  pages={1226},
  year={1995},
  publisher={APS}
}

@article{acebron2005kuramoto,
  title={The Kuramoto model: A simple paradigm for synchronization phenomena},
  author={Acebr{\'o}n, Juan A and Bonilla, Luis L and P{\'e}rez Vicente, Conrad J and Ritort, F{\'e}lix and Spigler, Renato},
  journal={Reviews of modern physics},
  volume={77},
  number={1},
  pages={137--185},
  year={2005},
  publisher={APS}
}

@article{lions1969quelques,
  title={" Quelques M{\'e}thodes de R{\'e}solution des Probl{\`e}mes aux Limites Non-Lin{\'e}aires,''},
  author={Lions, J-L},
  journal={Dunod},
  year={1969}
}

@article{temam1984navier,
  title={Navier-Stokes equations. theory and numerical analysis. with an appendix by F. thomasset},
  author={Temam, Roger},
  journal={Studies in Mathematics and its Applications},
  volume={2},
  year={1984}
}

@article{santambrogio2015optimal,
  title={Optimal transport for applied mathematicians},
  author={Santambrogio, Filippo},
  year={2015},
  publisher={Springer}
}

@book{sell2013dynamics,
  title={Dynamics of evolutionary equations},
  author={Sell, George R and You, Yuncheng},
  volume={143},
  year={2013},
  publisher={Springer Science \& Business Media}
}

@book{henry2006geometric,
  title={Geometric theory of semilinear parabolic equations},
  author={Henry, Daniel},
  volume={840},
  year={2006},
  publisher={Springer}
}

@article{paul2022microscopic,
  title={From microscopic to macroscopic scale equations: mean field, hydrodynamic and graph limits},
  author={Paul, Thierry and Tr{\'e}lat, Emmanuel},
  journal={arXiv preprint arXiv:2209.08832},
  year={2022}
}

@article{wiley2006size,
  title={The size of the sync basin},
  author={Wiley, Daniel A and Strogatz, Steven H and Girvan, Michelle},
  journal={Chaos: An Interdisciplinary Journal of Nonlinear Science},
  volume={16},
  number={1},
  year={2006},
  publisher={AIP Publishing}
}

@article{medvedev2014small,
  title={Small-world networks of Kuramoto oscillators},
  author={Medvedev, Georgi S},
  journal={Physica D: Nonlinear Phenomena},
  volume={266},
  pages={13--22},
  year={2014},
  publisher={Elsevier}
}

@article{bick2023phase,
  title={Phase oscillator networks with nonlocal higher-order interactions: Twisted states, stability, and bifurcations},
  author={Bick, Christian and B{\"o}hle, Tobias and Kuehn, Christian},
  journal={SIAM Journal on Applied Dynamical Systems},
  volume={22},
  number={3},
  pages={1590--1638},
  year={2023},
  publisher={SIAM}
}

@article{medvedev2015stability,
  title={Stability of twisted states in the Kuramoto model on Cayley and random graphs},
  author={Medvedev, Georgi S and Tang, Xuezhi},
  journal={Journal of nonlinear science},
  volume={25},
  number={6},
  pages={1169--1208},
  year={2015},
  publisher={Springer}
}

@article{medvedev2018continuum,
  title={The continuum limit of the Kuramoto model on sparse random graphs},
  author={Medvedev, Georgi S},
  journal={arXiv preprint arXiv:1802.03787},
  year={2018}
}

@article{kuehn2019power,
  title={Power network dynamics on graphons},
  author={Kuehn, Christian and Throm, Sebastian},
  journal={SIAM Journal on Applied Mathematics},
  volume={79},
  number={4},
  pages={1271--1292},
  year={2019},
  publisher={SIAM}
}

@article{gkogkas2021continuum,
  title={Continuum limits for adaptive network dynamics},
  author={Gkogkas, Marios Antonios and Kuehn, Christian and Xu, Chuang},
  journal={arXiv preprint arXiv:2109.05898},
  year={2021}
}

@article{ayi2024large,
  title={Large-population limits of non-exchangeable particle systems},
  author={Ayi, Nathalie and Duteil, Nastassia Pouradier},
  journal={Active Particles, Volume 4: Theory, Models, Applications},
  pages={79--133},
  year={2024},
  publisher={Springer}
}

@article{laurent2007local,
  title={Local and global existence for an aggregation equation},
  author={Laurent, Thomas},
  journal={Communications in Partial Differential Equations},
  volume={32},
  number={12},
  pages={1941--1964},
  year={2007},
  publisher={Taylor \& Francis}
}

@article{lancellotti2005vlasov,
  title={On the Vlasov limit for systems of nonlinearly coupled oscillators without noise},
  author={Lancellotti, Carlo},
  journal={Transport theory and statistical physics},
  volume={34},
  number={7},
  pages={523--535},
  year={2005},
  publisher={Taylor \& Francis}
}

@book{kuramoto2003chemical,
  title={Chemical oscillations, waves, and turbulence},
  author={Kuramoto, Yoshiki},
  year={2003},
  publisher={Courier Corporation}
}

@article{abdulrehem2009low,
  title={Low dimensional description of pedestrian-induced oscillation of the Millennium Bridge},
  author={Abdulrehem, Mahmoud M and Ott, Edward},
  journal={Chaos: An Interdisciplinary Journal of Nonlinear Science},
  volume={19},
  number={1},
  year={2009},
  publisher={AIP Publishing}
}

@article{strogatz2005crowd,
  title={Crowd synchrony on the Millennium Bridge},
  author={Strogatz, Steven H and Abrams, Daniel M and McRobie, Allan and Eckhardt, Bruno and Ott, Edward},
  journal={Nature},
  volume={438},
  number={7064},
  pages={43--44},
  year={2005},
  publisher={Nature Publishing Group UK London}
}

@article{childs2008stability,
  title={Stability diagram for the forced Kuramoto model},
  author={Childs, Lauren M and Strogatz, Steven H},
  journal={Chaos: An Interdisciplinary Journal of Nonlinear Science},
  volume={18},
  number={4},
  year={2008},
  publisher={AIP Publishing}
}

@article{lee2009large,
  title={Large coupled oscillator systems with heterogeneous interaction delays},
  author={Lee, Wai Shing and Ott, Edward and Antonsen, Thomas M},
  journal={Physical review letters},
  volume={103},
  number={4},
  pages={044101},
  year={2009},
  publisher={APS}
}

@article{yeung1999time,
  title={Time delay in the Kuramoto model of coupled oscillators},
  author={Yeung, MK Stephen and Strogatz, Steven H},
  journal={Physical review letters},
  volume={82},
  number={3},
  pages={648},
  year={1999},
  publisher={APS}
}

@article{gupta2014kuramoto,
  title={Kuramoto model of synchronization: equilibrium and nonequilibrium aspects},
  author={Gupta, Shamik and Campa, Alessandro and Ruffo, Stefano},
  journal={Journal of Statistical Mechanics: Theory and Experiment},
  volume={2014},
  number={8},
  pages={R08001},
  year={2014},
  publisher={IOP Publishing}
}

@article{loeper2006uniqueness,
  title={Uniqueness of the solution to the Vlasov--Poisson system with bounded density},
  author={Loeper, Gr{\'e}goire},
  journal={Journal de math{\'e}matiques pures et appliqu{\'e}es},
  volume={86},
  number={1},
  pages={68--79},
  year={2006},
  publisher={Elsevier}
}

@article{rodrigues2016kuramoto,
  title={The Kuramoto model in complex networks},
  author={Rodrigues, Francisco A and Peron, Thomas K DM and Ji, Peng and Kurths, J{\"u}rgen},
  journal={Physics Reports},
  volume={610},
  pages={1--98},
  year={2016},
  publisher={Elsevier}
}

@article{kuehn2022vlasov,
  title={Vlasov equations on digraph measures},
  author={Kuehn, Christian and Xu, Chuang},
  journal={Journal of Differential Equations},
  volume={339},
  pages={261--349},
  year={2022},
  publisher={Elsevier}
}

@article{tyulkina2018dynamics,
  title={Dynamics of noisy oscillator populations beyond the Ott-Antonsen ansatz},
  author={Tyulkina, Irina V and Goldobin, Denis S and Klimenko, Lyudmila S and Pikovsky, Arkady},
  journal={Physical review letters},
  volume={120},
  number={26},
  pages={264101},
  year={2018},
  publisher={APS}
}

@article{martens2009exact,
  title={Exact results for the Kuramoto model with a bimodal frequency distribution},
  author={Martens, Erik Andreas and Barreto, Ernest and Strogatz, Steven H and Ott, Edward and So, Paul and Antonsen, Thomas M},
  journal={Physical Review E—Statistical, Nonlinear, and Soft Matter Physics},
  volume={79},
  number={2},
  pages={026204},
  year={2009},
  publisher={APS}
}

@book{villani2008optimal,
  title={Optimal transport: old and new},
  author={Villani, C{\'e}dric and others},
  volume={338},
  year={2008},
  publisher={Springer}
}
\section{Appendix: Closed formula for $F^{-1}_{\alpha,\beta}(\xi)$}
In the following we would like to justify the explicit form of the inverse of $F_{\alpha,\beta}(\theta): [0,2\pi]\rightarrow[0,1]$, which is 
 $$F^{-1}_{\alpha,\beta}(\xi)=\begin{cases}
        G_{\alpha,\beta}(\xi)\qquad\qquad\; \text{if}\;\xi\in[0,c]\\
        G_{\alpha,\beta}(\xi)+2\pi\qquad \text{if}\;\xi\in(c,1]
    \end{cases}$$
    where we have that
    $$G_{\alpha,\beta}(\xi)\coloneqq2\arctan\left(\frac{1-\beta}{1+\beta}\tan\left(\pi\xi+\arctan\left(\frac{1+\beta}{1-\beta}\tan\left(\frac{\alpha}{2}\right)\right)\right)\right)-\alpha$$
    $$c\coloneqq\frac{1}{2}-\frac{1}{\pi}\arctan\left(\frac{1+\beta}{1-\beta}\tan\left(\frac{\alpha}{2}\right)\right).$$
So, given $\xi\in[0,1]$ we want to $\theta\in[0,2\pi]$ such that
$$\xi=F_{\alpha,\beta}(\theta)=\frac{\theta}{2\pi}+\frac{1}{\pi}\left(\arctan\left(\frac{\beta\sin(\theta+\alpha)}{1-\beta\cos(\theta+\alpha)}\right)-\arctan\left(\frac{\beta\sin(\alpha)}{1-\beta\cos(\alpha)}\right)\right).$$
Setting $\varphi\coloneqq\theta+\alpha$ and $A(\alpha)\coloneqq\arctan\left(\frac{\beta\sin(\alpha)}{1-\beta\cos(\alpha)}\right)$ we have that 
$$A(\alpha)+\pi\xi+\frac{\alpha}{2}=A(\varphi)+\frac{\varphi}{2}$$
so that using the trigonometric identity
$$\tan(\arctan(x)+\arctan(y))=\frac{x+y}{1-xy},\qquad\text{if}\;\; xy\neq1$$
we obtain 
$$\tan\left(A(\alpha)+\pi\xi+\frac{\alpha}{2}\right)=\frac{A(\varphi)+\tan\left(\frac{\varphi}{2}\right)}{1-A(\varphi)\tan\left(\frac{\varphi}{2}\right)}.$$
Before proceeding with the main computation we have to rewrite $A(\varphi)$ in terms of $\tan\left(\frac{\varphi}{2}\right)$, so by using these trigonometric identities
$$\sin(\varphi)=\frac{2\tan\left(\frac{\varphi}{2}\right)}{1+\tan\left(\frac{\varphi}{2}\right)^2};\qquad\cos(\varphi)=\frac{1-\tan\left(\frac{\varphi}{2}\right)^2}{1+\tan\left(\frac{\varphi}{2}\right)^2}$$
we obtain that 
$$A(\varphi)=\frac{2\beta\tan\left(\frac{\varphi}{2}\right)}{(1-\beta)+(1+\beta)\tan\left(\frac{\varphi}{2}\right)^2}.$$
Putting everything together one obtains
$$\tan\left(\pi\xi+A(\alpha)+\frac{\alpha}{2}\right)=\frac{1+\beta}{1-\beta}\tan\left(\frac{\varphi}{2}\right)$$
so that we finally obtain
$$\frac{\varphi}{2}=\arctan\left(\frac{1-\beta}{1+\beta}\tan\left(\pi\xi+A(\alpha)+\frac{\alpha}{2}\right)\right)+k\pi$$
for some $k\in\Z$. From this and applying the previous argument even to $A(\alpha)+\frac{\alpha}{2}$, we conclude that 
$$F^{-1}_{\alpha,\beta}(\xi)=2\arctan\left(\frac{1-\beta}{1+\beta}\tan\left(\pi\xi+\arctan\left(\frac{1+\beta}{1-\beta}\tan\left(\frac{\alpha}{2}\right)\right)\right)\right)+2k\pi-\alpha.$$
The only missing detail is to determine the right $k\in\Z$. Since $F_{\alpha,\beta}:[0,1]\rightarrow[0,2\pi]$ is monotone and continuous the we expect the same from $F^{-1}_{\alpha,\beta}:[0,2\pi]\rightarrow[0,1]$. It is easy to see that the only point of discontinuity of $F^{-1}_{\alpha,\beta}$ is 
$$\xi=c\equiv\frac{1}{2}-\frac{1}{\pi}\arctan\left(\frac{1+\beta}{1-\beta}\tan\left(\frac{\alpha}{2}\right)\right)$$ with $F^{-1}_{\alpha,\beta}(c^+)-F^{-1}_{\alpha,\beta}(c^-)=2\pi$ so that exists $\Tilde{k}\in\Z$ such that
$$F^{-1}_{\alpha,\beta}(\xi)=\begin{cases}
        G_{\alpha,\beta}(\xi)+2\pi\Tilde{k}\qquad\qquad\; \text{if}\;\xi\in[0,c]\\
        G_{\alpha,\beta}(\xi)+2\pi(\Tilde{k}+1)\qquad \text{if}\;\xi\in(c,1]
    \end{cases}$$
and by imposing that $F^{-1}_{\alpha,\beta}(0)=0$, or equivalently $F^{-1}_{\alpha,\beta}(2\pi)=1$, we obtain that $\Tilde{k}=0$, as we wanted.

\end{document}